\newcommand{\FF}{\mathcal{F}}
\newcommand{\RR}{\mathbb{R}}                                     
\theoremstyle{plain}  
\newtheorem{Satz}{Theorem}
\newtheorem*{Satz*}{Theorem}
\newtheorem*{SatzA}{Theorem 6}
\newtheorem*{SatzB}{Theorem 7}
\newtheorem{Kor}[Satz]{Corollary}
\newtheorem{Prop}[Satz]{Proposition}
\newtheorem{Probl}[Satz]{Problem}
\theoremstyle{definition} 
\newtheorem{Def}[Satz]{Definition}
\newtheorem{Bsp}[Satz]{Example}
\newtheorem{Konstr}[Satz]{Construction}
\crefname{Satz}{Theorem}{Theorems}
\crefname{Prop}{Proposition}{Propositions}
\crefname{Lem}{Lemma}{Lemmas}
\crefname{Kor}{Corollary}{Corollaries}
\crefname{Bem}{Remark}{Remarks}
\crefname{Bsp}{Example}{Examples}
\crefname{Def}{Definition}{Definitions}
\crefname{Probl}{Problem}{Problems}
\crefname{Konstr}{Construction}{Constructions}
\DeclareMathOperator{\aff}{aff}
\DeclareMathOperator{\conv}{conv}
\newcommand\eset{\varnothing}
\newcommand{\card}[1]{\left\lvert#1\right\rvert}
\newcommand{\ddd}{...}
\newcommand{\shortStyle}{}
\newcommand{\ie}{\shortStyle{i.e.},}
\newcommand{\eg}{\shortStyle{e.g.}}
\newcommand{\resp}{resp.}
\let\dualsym=\Delta
\newcommand{\dual}[1]{{#1}^{\dualsym}}
\newcommand{\ddual}[1]{{#1}^{\dualsym\dualsym}}
\newcommand{\blank}{\,\cdot\,}
\newcommand{\join}{\bowtie}
\newcommand{\cupdot}{\mathbin{\mathaccent\cdot\cup}}
\newcommand{\mylabel}{$(\roman*)$}
\newenvironment{myenumerate}{\begin{enumerate}[label=\mylabel]}{\end{enumerate}}
\renewcommand*{\eqref}[1]{%
  \hyperref[{#1}]{\textup{\tagform@{\ref*{#1}}}}%
}
  \DeclareTextCommandDefault\textcommabelow[1]
\hmode@bgroup\ooalign{\null#1\crcr\hidewidth\raise-.31ex
     \hbox{\check@mathfonts\fontsize\ssf@size\z@
     \math@fontsfalse\selectfont,}\hidewidth}\egroup}%
\begin{document}

	
\title[Vertex-Facet Assignments For Polytopes]{Vertex-Facet Assignments For Polytopes}
	
\author[T. Jahn]{Thomas Jahn}
\address{Faculty of Mathematics, University of Technology, 09107 Chemnitz, Germany}
\email{thomas.jahn@mathematik.tu-chemnitz.de}
		
\author[M. Winter]{Martin Winter}
\address{Faculty of Mathematics, University of Technology, 09107 Chemnitz, Germany}
\email{martin.winter@mathematik.tu-chemnitz.de\newline\rule{0pt}{1.5cm}\includegraphics[scale=0.7]{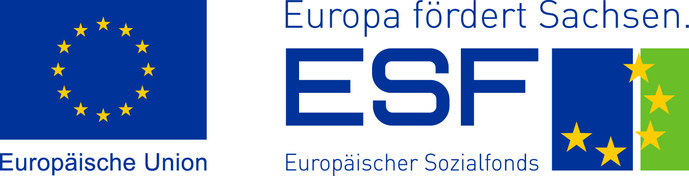}}
	
\subjclass[2010]{05D15, 52B05}
\keywords{facet, Hall's marriage theorem, injective map, matching, polytope, vertex}
		
\date{\today}
\begin{abstract}
Motivated by the search for reduced polytopes, we consider the following question: 
For which polytopes exists a vertex-facet assignment, that is, a matching between vertices and non-incident facets, so that the matching covers either all vertices, or all facets?

We provide general conditions for the existence of such an assignment. We conclude that such exist for all simple and simplicial polytopes, as well as all polytopes of dimension $d\le 6$. 
We construct counterexample in all dimensions $d\ge 7$.
\end{abstract}

\maketitle


\section{Background and motivation}



The content of the present paper is motivated by the study of \emph{reduced convex bodies} as introduced by Heil \cite{Heil1978}.
A convex body is said to be reduced if no proper convex subset has the same minimum width as the body, \ie\ the same minimum distance between parallel supporting hyperplanes.
This notions is relevant, as the extremal bodies in several geometric minimization problems are necessarily reduced (\eg\ in Pál's problem \cite{Heil1978}).

One may attempt to find polytopal minimizers, called \emph{reduced polytopes}.
While the existence of reduced polytopes is clear in the Euclidean plane (\eg\ a regular triangle), the only known examples in higher dimensions were obtained in \cite{GonzalezJaPoWa2018} -- a single family of 3-dimensional polyhedra (see \cref{fig:reduced}).

A key concept in the study of reduced polytopes are different forms of \emph{antipodality} of faces.
For example, the authors of \cite[Theorem~4]{AverkovMa2008b} obtained the following necessary condition for a polytope $P\subset\RR^d$ to be reduced: for every vertex $v\in\FF_0(P)$ there is an antipodal facet $F\in\FF_{d-1}(P)$ in the following sense:
\begin{myenumerate}
	\item $P$ admits parallel hyperplanes supported at $v$ and $F$ respectively,
	\item the distance of the point of $F$ closest to $v$ is the minimum width of $P$, and
	\item the point of $F$ closest to $v$ is in the relative interior of $F$.
\end{myenumerate}
It is an immediate consequence of this result, that a reduced polytope admits an \emph{injective} map from its vertices to \emph{non-incident} facets.


This latter notion can be seen as a form of combinatorial antipodality and is the main topic of the present paper.
We investigate, whether this property can be used to narrow down the search for combinatorial types of polytopes that may admit a reduced realization.
It will be convenient to work with the following notion:

\begin{Def}
A \emph{vertex-facet assignment} of a polytope $P$ is a matching of non-incident vertex-facet pairs, that covers all vertices or all facets (or both).
\end{Def}

This notion works well with polytope duality: clearly, a polytope $P$ has a vertex-facet assignment if and only if $P$ or its dual $\dual P$ (or both) admit an injective map from vertices to non-incident facets.
Furthermore, it allows the straight forward application of graph theoretic methods (via the \emph{vertex-facet graph} in \cref{sec:graph-theory}), in particular, Hall's marriage theorem (\cref{res:hall}).

As an application of the latter, in \cref{sec:low_dim} we obtain sufficient and necessary conditions for the existence of vertex-facet assignments in terms of the $f$-vectors of $P$ and its faces.
These main results read as follows:

\begin{SatzA}
$P$ has a vertex-facet assignment if and only if
%
\begin{equation*}
f_0(\sigma)+f_0(\dual \sigma)\le \max\{f_0(P),f_0(\dual P)\}
\end{equation*}
for all faces $\sigma\in\FF(P)$.
\end{SatzA}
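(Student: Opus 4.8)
The plan is to recast the existence of a vertex-facet assignment as a bipartite matching problem and then invoke Hall's marriage theorem (\cref{res:hall}). I would work with the vertex-facet graph of \cref{sec:graph-theory}: the bipartite graph $G$ with parts $\FF_0(P)$ and $\FF_{d-1}(P)$ in which a vertex $v$ and a facet $F$ are adjacent precisely when $v\notin F$. A matching covering all vertices has $f_0(P)$ edges, one covering all facets has $f_{d-1}(P)=f_0(\dual P)$ edges, and no matching meets more elements on one side than the other side contains; hence a vertex-facet assignment exists if and only if $G$ has a matching saturating whichever of the two parts $\FF_0(P)$ and $\FF_{d-1}(P)$ is smaller (a perfect matching, when they are equal). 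Replacing $P$ by $\dual P$ if necessary -- an operation that swaps $f_0(P)$ with $f_0(\dual P)$, sends each face $\sigma$ to $\dual\sigma$, and hence fixes both $f_0(\sigma)+f_0(\dual\sigma)$ and $\max\{f_0(P),f_0(\dual P)\}$ -- I may assume $f_0(P)\le f_0(\dual P)$, so that the goal is to saturate $\FF_0(P)$.

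Next I would apply Hall's theorem: $G$ has a matching saturating $\FF_0(P)$ if and only if $\card{N(S)}\ge\card S$ for every $S\subseteq\FF_0(P)$, where $N(S)\subseteq\FF_{d-1}(P)$ is the set of facets non-incident to at least one vertex of $S$. Its complement $\FF_{d-1}(P)\setminus N(S)$ is exactly the set of facets containing all vertices of $S$, equivalently the facets containing the smallest face $\sigma(S)$ spanned by $S$; by polytope duality these are in bijection with the vertices of the dual face $\dual{\sigma(S)}$, so $\card{N(S)}=f_0(\dual P)-f_0(\dual{\sigma(S)})$. Hence Hall's criterion becomes $\card S+f_0(\dual{\sigma(S)})\le f_0(\dual P)$ for all $S\subseteq\FF_0(P)$.

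Finally I would reduce this family of inequalities to one inequality per face. Since $S\subseteq\FF_0(\sigma(S))$ we have $\card S\le f_0(\sigma(S))$, so the inequality for $S$ follows once we know $f_0(\sigma(S))+f_0(\dual{\sigma(S)})\le f_0(\dual P)$ (and when $\sigma(S)=P$ the required bound is just $f_0(P)\le f_0(\dual P)$, which holds by assumption); conversely, for any face $\sigma$ the choice $S=\FF_0(\sigma)$ satisfies $\sigma(S)=\sigma$ and $\card S=f_0(\sigma)$, so $f_0(\sigma)+f_0(\dual\sigma)\le f_0(\dual P)$ is itself one of the Hall inequalities. Therefore a vertex-facet assignment exists if and only if $f_0(\sigma)+f_0(\dual\sigma)\le f_0(\dual P)=\max\{f_0(P),f_0(\dual P)\}$ for all faces $\sigma\in\FF(P)$ (the improper faces, if included in $\FF(P)$, only contribute trivially valid inequalities). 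I expect the only real obstacle to be organizational rather than conceptual: carefully justifying the passage from ``covers all vertices or all facets'' to ``saturates the smaller part'' (which rests on the elementary size bound for matchings) and the bookkeeping in the last reduction, namely that among all $S$ spanning a given face the binding case occurs at $S=\FF_0(\sigma)$; once the dictionary between $G$ and the face lattice is fixed, the computation of $N(S)$ and the invocation of Hall's theorem are routine.
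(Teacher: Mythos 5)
Your proof is correct and is essentially the paper's argument: both reduce the problem to Hall's condition in the vertex-facet graph and identify the complement of a neighborhood with the vertex set (resp.\ facet set) of a single face, then observe that the extremal Hall sets are exactly the sets $\FF_0(\sigma)$ (resp.\ $\FF_0(\dual\sigma)$). The only difference is cosmetic: the paper normalizes to $f_0(P)\ge f_0(\dual P)$ and applies Hall to subsets of facets (via \cref{res:non_neighborhood,res:facet_intersection}, where the non-neighborhood of $\{F_1,\dots,F_k\}$ is $\FF_0(F_1\cap\dots\cap F_k)$), whereas you normalize the other way and apply Hall to subsets of vertices -- the two are exchanged by duality.
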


Here, $\dual P$ denotes the dual polytope, and $\dual\sigma$ the dual face to $\sigma\in\FF(P)$.
This notation is further explained in \cref{sec:notation}.

\begin{SatzB}
Suppose that for every face $\sigma\in\FF(P)$ holds: $\sigma$ or $\dual\sigma$ (or both) have at least as many facets as vertices.
Then $P$ has a vertex-facet assignment.
\end{SatzB}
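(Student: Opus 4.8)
The plan is to deduce the statement from Theorem~6: it suffices to show that the hypothesis forces $f_0(\sigma)+f_0(\dual\sigma)\le\max\{f_0(P),f_0(\dual P)\}$ for \emph{every} face $\sigma\in\FF(P)$. The cases $\sigma=\eset$ and $\sigma=P$ need no work, since there the left-hand side is just $f_0(\dual P)$, resp.\ $f_0(P)$. For a proper non-empty face $\sigma$, I would first observe that the polytope duality $P\leftrightarrow\dual P$ (under which $\sigma\leftrightarrow\dual\sigma$, with $\dim\sigma+\dim\dual\sigma=d-1$, and $\ddual\sigma=\sigma$, $\ddual P\cong P$) leaves both the claimed inequality and the hypothesis unchanged; hence one may assume the disjunct ``$\dual\sigma$ has at least as many facets as vertices'' holds, the other case being obtained by running the argument in $\dual P$ instead.

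Second, I would rewrite the facets of the polytope $\dual\sigma$ in terms of the face lattice of $P$. A facet of $\dual\sigma$ is a face $\eta\le\dual\sigma$ of $\dual P$ with $\dim\eta=\dim\dual\sigma-1$; dualizing, $\eta\le\dual\sigma$ turns into $\sigma\le\dual\eta$ with $\dim\dual\eta=\dim\sigma+1$. Thus the facets of $\dual\sigma$ are in bijection with the faces of $P$ covering $\sigma$, i.e.\ the faces of dimension $\dim\sigma+1$ containing $\sigma$; write $t$ for their number. As $\dual\sigma$ has $f_0(\dual\sigma)$ vertices, the disjunct in force now says $f_0(\dual\sigma)\le t$.

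The heart of the argument is a bound on $t$, which follows from the observation I would establish first: \emph{two distinct faces $\tau_1\neq\tau_2$ of $P$ both covering $\sigma$ share no vertex outside $\sigma$.} For such a vertex $v$ would lie in the face $\tau_1\cap\tau_2$ together with all of $\sigma$, and since $\aff(\sigma)\cap P=\sigma$ forces $v\notin\aff(\sigma)$, this makes $\dim(\tau_1\cap\tau_2)\ge\dim\sigma+1=\dim\tau_1$, so $\tau_1=\tau_1\cap\tau_2=\tau_2$, a contradiction. Choosing one vertex outside $\sigma$ from each of the $t$ covering faces then gives $t$ distinct vertices of $P$ not in $\sigma$, whence $t\le f_0(P)-f_0(\sigma)$. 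Combining,
\[
 f_0(\sigma)+f_0(\dual\sigma)\;\le\;f_0(\sigma)+t\;\le\;f_0(P)\;\le\;\max\{f_0(P),f_0(\dual P)\},
\]
which is exactly the condition required by Theorem~6; invoking that theorem concludes the proof.

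I do not expect a serious obstacle: the only genuine content is the short ``vertex-disjointness'' observation, and the rest is routine bookkeeping. The step most prone to slips is keeping the several dualities consistent — the polytope duality $P\leftrightarrow\dual P$, the face correspondence $\sigma\leftrightarrow\dual\sigma$, and the reflexive swap used to reduce to a single disjunct — together with the dimension relation $\dim\sigma+\dim\dual\sigma=d-1$. (Alternatively, the same count could be fed directly into Hall's marriage theorem on the vertex-facet graph, but routing through Theorem~6 is cleaner.)
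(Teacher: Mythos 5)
Your proof is correct and follows essentially the same route as the paper: verify the inequality of Theorem~6 for every face, split on which of the two disjuncts of the hypothesis holds, and dispatch one case by duality. The only real difference is cosmetic — you treat the disjunct on $\dual\sigma$ directly, bounding the number of faces covering $\sigma$ by the number of vertices of $P$ outside $\sigma$ via your ``private vertex'' observation, whereas the paper treats the disjunct on $\sigma$ directly, bounding its number of facets by the number of facets of $P$ not containing $\sigma$; these two counting steps are exact duals of one another.
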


It is a corollary of the second theorem that simple/simplicial polytopes always admit vertex-facets assignments (\cref{res:simple_simplicial}), as do polytopes in dimension $d\le 6$ (\cref{res:dim_6}).

In \cref{sec:high_dim} we construct polytopes without vertex-facets assignments in every dimension $d\ge 7$ using the free join construction (\cref{res:high-dimensions}).

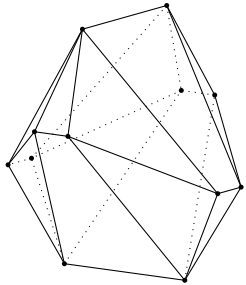
\begin{figure}
\centering
\begin{tikzpicture}[
		dot/.style={fill,shape=circle,scale=0.2},
		every label/.style={scale=0.7}
]
\begin{axis}[%
width=1.8in,
height=1.8in,
view={35}{10},
scale only axis,
hide axis,
xmin=-0.6,
xmax= 0.6,
ymin=-0.6,
ymax= 0.6,
zmin=-0.6,
zmax= 0.6,
]

\pgfmathsetmacro{\t}{0.550000000000000}
\pgfmathsetmacro{\x}{0.617649095979951}
\pgfmathsetmacro{\s}{0.135138493102611}
\pgfmathsetmacro{\h}{0.098430025240916}
\pgfmathsetmacro{\r}{0.354718358670911}

\coordinate (A) at (axis cs: \r,   0, -\t);
\coordinate (B) at (axis cs:-\r,   0, -\t);
\coordinate (C) at (axis cs:  0,  \r,  \t);
\coordinate (D) at (axis cs:  0, -\r,  \t);
\coordinate (E) at (axis cs: \h,  \x,  \s);
\coordinate (F) at (axis cs:-\h,  \x,  \s);
\coordinate (G) at (axis cs: \h, -\x,  \s);
\coordinate (H) at (axis cs:-\h, -\x,  \s);
\coordinate (I) at (axis cs: \x,  \h, -\s);
\coordinate (J) at (axis cs: \x, -\h, -\s);
\coordinate (K) at (axis cs:-\x,  \h, -\s);
\coordinate (L) at (axis cs:-\x, -\h, -\s);


\node [dot] at (A) {};
\node [dot] at (B) {};
\node [dot] at (C) {};
\node [dot] at (D) {};
\node [dot] at (E) {};
\node [dot] at (F) {};
\node [dot] at (G) {};
\node [dot] at (H) {};
\node [dot] at (I) {};
\node [dot] at (J) {};
\node [dot] at (K) {};
\node [dot] at (L) {};

\foreach \a/\b in {A/B,C/D,A/I,A/J,C/I,D/J,I/J,B/L,D/L,G/H,D/G,D/H,H/L,G/J,E/I,C/E,A/G,B/H} {
\edef\temp{\noexpand \draw (\a)--(\b);}
\temp
}

\foreach \a/\b in {B/K,E/F,C/F,F/K,A/E,B/F,C/K,K/L} {
\edef\temp{\noexpand \draw[dotted] (\a)--(\b);}
\temp
}
\end{axis}
\end{tikzpicture}
\caption{A reduced polyhedron.}
\label{fig:reduced}
\end{figure}

\section{A word on notation}\label{sec:notation}

Throughout this paper, let $P\subset\RR^d$ be a full-dimensional convex polytope, that is, $P$ is the convex hull of finitely many points, and $\aff(P)=\RR^d$.
Let $\FF(P)$ denote the face lattice of $P$, and $\FF_\delta(P)\subseteq\FF(P)$ the subset of faces of dimension $\delta$.

Our results are primarily combinatorial, and for the most part, we can identify $P$ with its face lattice $\FF(P)$.
Similarly, each face $\sigma\in\FF(P)$ is identified with the sub-lattice
$$\FF(\sigma):=[\eset,\sigma]:=\{\tau\in\FF(P)\mid \eset\subseteq\tau\subseteq\sigma\}\subseteq\FF(P).$$
The dual polytope $\dual P$ and, for each $\sigma\in\FF(P)$, the dual face $\dual\sigma$, shall be defined solely by their face~lattices:
the lattice $\FF(\dual P)$ is defined on the same set as $\FF(P)$, but with inverted lattice order.
The face lattice of $\dual\sigma$ is
$$\FF(\dual \sigma):=[P,\sigma]:=\{\tau\in\FF(\dual P)\mid P\supseteq \tau \supseteq \sigma\}\subseteq\FF(\dual P),$$
also with inverted lattice order.
Consider \cref{fig:lattice} for a visualization of the placement of these sub-lattices in $\FF(P)$.

It is important to distinguish between $\sigma\in\FF(P)$ as a face of $P$, and $\sigma$ as a polytope itself, since $\dual \sigma$ will refer to the dual polytope or the dual face, depending on the chosen interpretation.
Similarly, $\FF(\dual\sigma)$ will refer to distinct sub-lattices of $\FF(P)$ depending on the interpretation.
To avoid confusion, we therefore use the following convention: lower-case greek letters like $\sigma,\tau,\ddd$ do always denote faces of $P$, hence $\dual \sigma, \dual \tau, ...$ do always denote dual faces.
If other symbols are used to denote faces, this is explicitly mentioned in the respective place (this will only happen in \cref{sec:high_dim} in the proof of \cref{res:high-dimensions}).

Naturally, $\FF_0(P)$ refers to the set of vertices of $P$, and $\FF_0(\sigma)\subseteq\FF_0(P)$ refers to the set of those vertices of $P$, which are contained in the face $\sigma\in\FF(P)$. 
The notation $\FF_0(\dual P)$ is an alternative way to address the set of facets $\FF_{d-1}(P)$ of $P$.
While slightly unusual, we will use this notation in \cref{res:face_estimation} (and the proof of \cref{res:weak_condition}), to highlight the symmetry of the statement under polytope duality.
Similarly, $\FF_0(\dual\sigma)\subseteq\FF_0(\dual P)$ can be used to conveniently denote the set of those facets of $P$ that contain the face $\sigma\in\FF(P)$.

The cardinality of any of the sets defined above is denoted with $f_\delta(\blank):=\card{\FF_\delta(\blank)}$.

\begin{figure}
\includegraphics[width=0.55\textwidth]{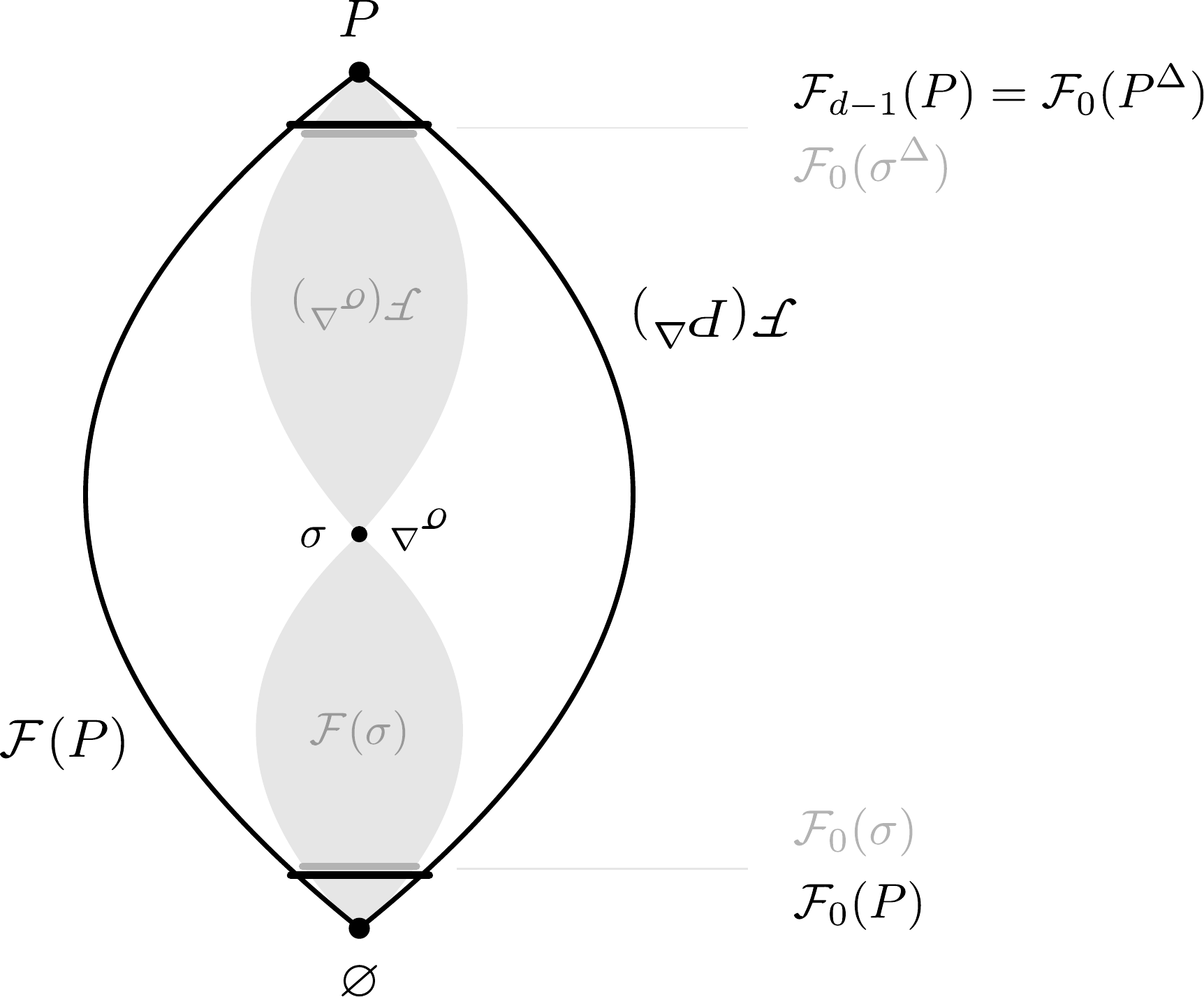}
\caption{
Schematic representation of the face lattice $\FF(P)$ of $P$, with the relevant sub-lattices and subsets highlighted.
The symbol $\sigma$ denotes a face $\sigma\in\FF(P)$. 
The upside-down labels indicate that the respective elements belong to the dual polytope and the respective lattices have inverted lattice order.
}
\label{fig:lattice}
\end{figure}

\section{Graph-theoretic formulation}
\label{sec:graph-theory}

%

To any polytope $P\subset\RR^d$, we assign a bipartite graph as follows:

\begin{Def}\label{def:vertex-facet-graph}
The \emph{vertex-facet graph} $G(P)$ of $P$ is the bipartite graph $G=(V_1\cupdot V_2,E)$ whose partition classes are (disjoint copies of)
the vertices $V_1=\FF_0(P)$ and the facets $V_2=\FF_{d-1}(P)$ of $P$. A vertex $v\in\FF_0(P)$ and a facet $F\in\FF_{d-1}(P)$ are adjacent in $G$ if and only if they are \emph{non-incident} in $P$.
\end{Def}

To avoid confusion with the terminology of polytopes, the elements of $V_1\cupdot V_2$ of $G(P)$ shall be called \emph{nodes} (instead of vertices).

A \emph{matching} $M\subseteq G(P)$ is a 1-regular subgraph. $M$ is said to \emph{cover} $\FF_0(P)$ (\resp\ $\FF_{d-1}(P)$), if~every vertex in $\FF_0(P)$ (\resp\ $\FF_{d-1}(P)$) is incident to an edge in $M$.

A vertex-facet assignment of $P$ is a matchings in the vertex-facet graph that covers either $\FF_0(P)$ or $\FF_{d-1}(P)$. 
Hall's marriage theorem \cite[\mbox{Theorem}~2.1.2]{Diestel2017} is a classical result that gives a necessary and sufficient condition for the existence of a matchings of a bipartite graph $G=(V_1\cupdot V_2,E)$ that covers, say, $V_1$.

\begin{Satz}\label{res:hall}
The bipartite graph $G=(V_1\cupdot V_2,E)$ possesses a matching that covers $V_1$ if and only if $\card{S}\leq\card{N_G(S)}$ for all $S\subseteq V_1$.
\end{Satz}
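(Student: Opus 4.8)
The plan is to prove necessity directly and sufficiency by induction on $\card{V_1}$. For necessity I would observe that if $M$ is a matching covering $V_1$, then sending each node of a set $S\subseteq V_1$ to its unique $M$-partner defines an injection $S\to N_G(S)$, whence $\card{S}\le\card{N_G(S)}$ for every $S\subseteq V_1$; this is the easy half. For sufficiency I would induct on $\card{V_1}$, the case $\card{V_1}\le 1$ being immediate since Hall's condition then forces an edge at the unique node of $V_1$. For the inductive step I would split into two cases according to how tightly Hall's condition is satisfied.

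\emph{Case 1: every nonempty proper subset $S\subsetneq V_1$ satisfies $\card{N_G(S)}\ge\card{S}+1$.} Here I would pick any node $v\in V_1$ and, using $\card{N_G(\{v\})}\ge 1$, a neighbor $w\in V_2$, and then delete $v$ and $w$ (with their incident edges) to obtain $G'$. For $T\subseteq V_1\setminus\{v\}$ one has $N_{G'}(T)\supseteq N_G(T)\setminus\{w\}$, hence $\card{N_{G'}(T)}\ge\card{N_G(T)}-1\ge\card{T}$ — trivially if $T=\eset$, otherwise by the case hypothesis. So $G'$ satisfies Hall's condition; since $\card{V_1\setminus\{v\}}<\card{V_1}$, the inductive hypothesis yields a matching of $G'$ covering $V_1\setminus\{v\}$, and adjoining the edge $vw$ produces the desired matching of $G$.

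\emph{Case 2: some nonempty proper subset $S\subsetneq V_1$ has $\card{N_G(S)}=\card{S}$.} Writing $T:=N_G(S)\subseteq V_2$, I would split $G$ into the induced bipartite graphs $G_1$ on $S\cupdot T$ and $G_2$ on $(V_1\setminus S)\cupdot(V_2\setminus T)$. One checks that $G_1$ satisfies Hall's condition because $N_{G_1}(S')=N_G(S')$ for all $S'\subseteq S$ (as $N_G(S')\subseteq N_G(S)=T$), and that $G_2$ does too, since for every $U\subseteq V_1\setminus S$, using $N_G(U)\cup T=N_G(U)\cup N_G(S)=N_G(U\cup S)$ and Hall's condition for $U\cup S$, $$\card{N_G(U)\setminus T}=\card{N_G(U)\cup T}-\card{T}\ge\card{N_G(U\cup S)}-\card{T}\ge\card{U\cup S}-\card{S}=\card{U}.$$ As both $\card{S}$ and $\card{V_1\setminus S}$ are strictly less than $\card{V_1}$, induction provides a matching $M_1$ of $G_1$ covering $S$ and a matching $M_2$ of $G_2$ covering $V_1\setminus S$; since $G_1$ and $G_2$ share no nodes, $M_1\cup M_2$ is a matching of $G$ covering all of $V_1$.

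I expect the main obstacle to be Case 2: setting up the dichotomy on the existence of a "tight" set $S$, and then verifying that both pieces $G_1$ and $G_2$ again satisfy Hall's condition — the displayed computation for $G_2$ being the heart of the argument. (One could instead derive the statement from König's theorem or from max-flow–min-cut, but the inductive proof above is self-contained and suffices for our applications in \cref{sec:low_dim}.)
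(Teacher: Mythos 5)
Your proof is correct: it is the classical Halmos--Vaughan induction on $\card{V_1}$, with the standard dichotomy between the case where Hall's condition holds with strict slack on every nonempty proper subset of $V_1$ and the case where some tight set $S$ exists. The deletion argument in Case 1 (noting that any $T\subseteq V_1\setminus\{v\}$ is automatically a proper subset, so the slack hypothesis absorbs the loss of $w$) and the verification in Case 2 that both induced pieces inherit Hall's condition (in particular the computation $\card{N_G(U)\setminus T}=\card{N_G(U\cup S)}-\card{T}\ge\card{U}$) are both carried out correctly, and the two partial matchings combine because $G_1$ and $G_2$ are node-disjoint. Be aware, though, that the paper does not prove this statement at all: it is quoted as a classical result with a citation to Diestel's textbook, so there is no in-paper argument to compare against. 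Your self-contained proof is a standard and perfectly valid substitute for that citation; the only cosmetic remark is that in Case 1 the inclusion $N_{G'}(T)\supseteq N_G(T)\setminus\{w\}$ is in fact an equality, and in Case 2 the step $\card{N_G(U)\cup T}\ge\card{N_G(U\cup S)}$ is likewise an equality, but stating them as inequalities does no harm.
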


Here, $N_G(S)$ denotes the set neighbors of vertices in $S$.
The condition $\card{S}\leq\card{N_G(S)}$ for all $S\subseteq V_1$ in \cref{res:hall} is called \emph{Hall condition}.

\section{Existence in low dimensions}\label{sec:low_dim}

The existence of vertex-facet assignments are trivial for $d\in\{1,2\}$.
By applying Hall's marriage theorem (\cref{res:hall}) to the vertex-facet graph $G(P)$ (\cref{def:vertex-facet-graph}), we are able to show that there are no counterexamples up to dimension $6$ (\cref{res:dim_6}).
Furthermore, simple/simplicial polytopes have vertex-facet assignments in every dimension (\cref{res:simple_simplicial}).

\subsection{The case $f_0(P)\ge f_{d-1}(P)$}

In this section, we assume that $P$ has at least as many vertices as facets.
We therefore try to match vertices and facets in $G(P)$ in such a way, so that all facets are covered.

If $S\subseteq\FF_{d-1}(P)$ is a set of facets, there exists a nice geometric interpretation of the \emph{non-neighborhood} $\FF_0(P)\setminus N_{G(P)}(S)$.

\begin{Prop}\label{res:non_neighborhood}
The non-neighborhood of a set  $\{F_1,\ddd,F_k\}\subseteq\FF_{d-1}(P)$ of facets consists of the vertices of the face $F_1\cap\cdots \cap F_k$, \ie\
\begin{equation}\label{eq:non_neighborhood}
\FF_0(P)\setminus N_{G(P)}(\{F_1,\ddd,F_k\}) = \FF_0(F_1\cap\cdots\cap F_k).
\end{equation}
\begin{proof}
A vertex is in the non-neighborhood of $\{F_1,\ddd, F_k\}$ if and only if it (considered as a node in $G(P)$) is \emph{not} adjacent to any of the $F_i$. Since adjacency in $G(P)$ means non-incidence in $P$, these are exactly the vertices incident to \emph{all} the $F_i$, that is, contained in the face $F_1\cap\cdots \cap F_k$. The vertices of $P$ that are contained in the face $F_1\cap\cdots\cap F_k$ are exactly the vertices of this face.
\end{proof}
\end{Prop}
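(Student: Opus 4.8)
The plan is to unwind the definition of adjacency in the vertex-facet graph $G(P)$ and translate it into the language of the face lattice. First I would recall that a vertex $v\in\FF_0(P)$ is adjacent in $G(P)$ to a facet $F_i$ precisely when $v$ and $F_i$ are \emph{non-incident} in $P$, i.e.\ when $v\notin F_i$. Hence $v$ lies in the neighborhood $N_{G(P)}(\{F_1,\ddd,F_k\})$ if and only if $v\notin F_i$ for at least one index $i$; dually, $v$ lies in the non-neighborhood $\FF_0(P)\setminus N_{G(P)}(\{F_1,\ddd,F_k\})$ if and only if $v\in F_i$ for \emph{every} $i$, that is, if and only if $v\in F_1\cap\cdots\cap F_k$.

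Next I would invoke the standard fact that an intersection of faces of a polytope is again a face of that polytope (possibly the empty face), so that $\sigma:=F_1\cap\cdots\cap F_k$ is a well-defined element of $\FF(P)$. It then remains to identify the set of vertices of $P$ contained in $\sigma$ with $\FF_0(\sigma)$; but this is precisely the identification convention fixed in \cref{sec:notation}, under which $\FF_0(\sigma)$ denotes the vertices of $P$ lying in $\sigma$ (equivalently, the vertices of $\sigma$ regarded as a polytope in its own right). Combining this with the equivalence from the previous paragraph yields \eqref{eq:non_neighborhood}.

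I do not expect a genuine obstacle here: the statement is a direct dictionary entry between the combinatorial description of $G(P)$ (adjacency $=$ non-incidence) and the lattice-theoretic description of $\FF(P)$ (meet of facets). The only point worth an explicit remark is that one should allow $F_1\cap\cdots\cap F_k$ to be the empty face, in which case both sides of \eqref{eq:non_neighborhood} are empty; this degenerate situation occurs exactly when the chosen facets share no common vertex, so the stated identity still holds verbatim.
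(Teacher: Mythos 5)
Your argument is correct and follows the same route as the paper: unwind adjacency in $G(P)$ as non-incidence, observe that the non-neighborhood is exactly the set of vertices lying in every $F_i$, and identify these with the vertices of the face $F_1\cap\cdots\cap F_k$. Your extra remarks (that the intersection of facets is a face, possibly empty) are harmless additions to what the paper leaves implicit.
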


Together with Hall's marriage theorem (\cref{res:hall}) we immediately obtain

\begin{Kor}\label{res:facet_intersection}
A polytope $P$ with $f_0(P)\ge f_{d-1}(P)$ has a vertex-facet assignment if and only if for all $\{F_1,\ddd, F_k\}\subseteq \FF_{d-1}(P)$ holds
\begin{equation}\label{eq:facet_intersection}
f_0(F_1\cap\cdots\cap F_k)\le f_0(P)-k.
\end{equation}
%
\end{Kor}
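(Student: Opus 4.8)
The plan is to combine Hall's marriage theorem (\cref{res:hall}) with the computation of non-neighborhoods in \cref{res:non_neighborhood}. The first step is a reduction: under the hypothesis $f_0(P)\ge f_{d-1}(P)$, a vertex-facet assignment of $P$ is exactly a matching in $G(P)$ that covers the facet class $V_2=\FF_{d-1}(P)$. Indeed, any matching has at most $f_{d-1}(P)\le f_0(P)$ edges, so it can cover all of $\FF_0(P)$ only when $f_0(P)=f_{d-1}(P)$, in which case it is a perfect matching and covers $\FF_{d-1}(P)$ as well; conversely, a matching covering $\FF_{d-1}(P)$ is by definition already a vertex-facet assignment. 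So it suffices to characterize when $G(P)$ has a matching covering $\FF_{d-1}(P)$.

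For this I would apply \cref{res:hall} with $V_1=\FF_{d-1}(P)$ (the class to be covered) and $V_2=\FF_0(P)$: such a matching exists if and only if $\card S\le\card{N_{G(P)}(S)}$ for every $S\subseteq\FF_{d-1}(P)$. Fix $S=\{F_1,\ddd,F_k\}$. Passing to complements inside $\FF_0(P)$ gives $\card{N_{G(P)}(S)}=f_0(P)-\card{\FF_0(P)\setminus N_{G(P)}(S)}$, and by \cref{res:non_neighborhood} the set $\FF_0(P)\setminus N_{G(P)}(S)$ equals $\FF_0(F_1\cap\cdots\cap F_k)$. Hence the Hall condition for this $S$ reads $k\le f_0(P)-f_0(F_1\cap\cdots\cap F_k)$, which is exactly inequality \eqref{eq:facet_intersection}; quantifying over all $S\subseteq\FF_{d-1}(P)$ gives the corollary.

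There is no genuine difficulty beyond this bookkeeping. The only step that truly uses the hypothesis $f_0(P)\ge f_{d-1}(P)$, and hence deserves a word of justification, is the reduction in the first paragraph: without it, a vertex-facet assignment could instead cover the vertices, and the symmetric form of \cref{res:hall} applied on the other side (together with the dual statement of \cref{res:non_neighborhood}) would be needed. Everything else is a mechanical translation of the Hall condition through the identity of \cref{res:non_neighborhood}.
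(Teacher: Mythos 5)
Your proof is correct and follows exactly the route the paper intends (the paper states the corollary follows "immediately" from \cref{res:hall} and \cref{res:non_neighborhood} without writing out the details): apply Hall's condition to the facet class, translate the neighborhood count via the non-neighborhood identity, and note that under $f_0(P)\ge f_{d-1}(P)$ a vertex-facet assignment is the same as a matching covering all facets. Your explicit justification of that last reduction is a welcome addition that the paper leaves implicit.
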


\subsection{The general case}

We now drop the assumption $f_0(P)\ge f_{d-1}(P)$. 
The~result of \cref{res:facet_intersection} can be nicely symmetrized:


\begin{Satz}\label{res:face_estimation}
$P$ has a vertex-facet assignment if and only if
%
\begin{equation}\label{eq:face_estimation}
f_0(\sigma)+f_0(\dual \sigma)\le \max\{f_0(P),f_0(\dual P)\}
\end{equation}
for all faces $\sigma\in\FF(P)$.
\end{Satz}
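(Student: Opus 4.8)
The strategy is to reduce to the case $f_0(P) \ge f_{d-1}(P)$, which is already handled by Corollary \ref{res:facet_intersection}, and then reinterpret the inequality \eqref{eq:facet_intersection} in the symmetric form \eqref{eq:face_estimation}. By polytope duality, a vertex-facet assignment of $P$ exists if and only if one exists for $\dual P$, and the condition \eqref{eq:face_estimation} is manifestly symmetric under $P \leftrightarrow \dual P$ and $\sigma \leftrightarrow \dual\sigma$ (since $\dual{(\dual\sigma)} = \sigma$ and $\max\{f_0(P), f_0(\dual P)\}$ is unchanged). So without loss of generality we may assume $f_0(P) \ge f_{d-1}(P) = f_0(\dual P)$, hence $\max\{f_0(P), f_0(\dual P)\} = f_0(P)$.

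Under this assumption, Corollary \ref{res:facet_intersection} says $P$ has a vertex-facet assignment if and only if $f_0(F_1 \cap \cdots \cap F_k) \le f_0(P) - k$ for every set $\{F_1,\ldots,F_k\}$ of facets. The key observation is a dictionary between intersections of facets and faces of $P$: every nonempty intersection $\sigma = F_1 \cap \cdots \cap F_k$ is a face of $P$, and conversely every face $\sigma \in \FF(P)$ arises this way. Moreover, the number of facets containing $\sigma$ is exactly $f_0(\dual\sigma)$ (this is the notational remark made in \cref{sec:notation}: $\FF_0(\dual\sigma)$ is the set of facets of $P$ containing $\sigma$). I will argue both directions:

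\emph{Necessity.} Suppose $P$ has a vertex-facet assignment. Fix a face $\sigma \in \FF(P)$, and let $k = f_0(\dual\sigma)$ be the number of facets containing $\sigma$, say $F_1,\ldots,F_k$. Then $F_1\cap\cdots\cap F_k = \sigma$ (the intersection of all facets through $\sigma$ is $\sigma$ itself), so \eqref{eq:facet_intersection} gives $f_0(\sigma) = f_0(F_1\cap\cdots\cap F_k) \le f_0(P) - k = f_0(P) - f_0(\dual\sigma)$, which rearranges to \eqref{eq:face_estimation}. One must also check the edge cases $\sigma = P$ (where $f_0(\dual\sigma) = 0$ and the inequality is trivial) and $\sigma = \eset$ (where $f_0(\sigma) = 0$ and $f_0(\dual\sigma) = f_0(\dual P) \le f_0(P)$ by assumption).

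\emph{Sufficiency.} Suppose \eqref{eq:face_estimation} holds for all faces. Given any set $\{F_1,\ldots,F_k\}$ of facets, if the intersection is empty then $f_0(F_1\cap\cdots\cap F_k) = 0 \le f_0(P) - k$ needs $k \le f_0(P)$, which holds since $f_0(P) \ge f_{d-1}(P) \ge k$. If the intersection is a nonempty face $\sigma$, then $\sigma$ is contained in at least $k$ facets, i.e. $f_0(\dual\sigma) \ge k$, and \eqref{eq:face_estimation} gives $f_0(F_1\cap\cdots\cap F_k) = f_0(\sigma) \le f_0(P) - f_0(\dual\sigma) \le f_0(P) - k$, which is \eqref{eq:facet_intersection}. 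By Corollary \ref{res:facet_intersection}, $P$ has a vertex-facet assignment.

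The main subtlety — the part worth being careful about — is the correspondence "intersections of $k$ facets" versus "faces contained in $\ge k$ facets," and in particular the fact that $f_0(\dual\sigma)$ counts precisely the facets through $\sigma$ while $F_1\cap\cdots\cap F_k$ (over all facets through $\sigma$) recovers $\sigma$ exactly. Everything else is bookkeeping: the duality symmetrization and the two degenerate faces $\eset$ and $P$. No deep new idea is needed beyond Corollary \ref{res:facet_intersection}; the content is entirely in packaging its asymmetric statement into a form symmetric under duality.
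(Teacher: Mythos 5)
Your proof is correct and follows essentially the same route as the paper's: reduce by duality to the case $f_0(P)\ge f_0(\dual P)$, then translate \cref{res:facet_intersection} into the symmetric form via the correspondence between faces $\sigma$ and the intersections of the $f_0(\dual\sigma)$ facets containing $\sigma$. Your extra attention to the degenerate faces $\eset$ and $P$ and to empty intersections is a welcome bit of added care, but the argument is the same.
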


Note that we wrote $f_0(\dual P)$ to denote the number of facets of $P$, and $f_0(\dual\sigma)$ to denote the number of those facets that contain $\sigma$.
This notation emphasizes the invariance of the result under polytope duality, \ie\ it holds for $P$ if and only if it holds for $\dual P$.

\begin{proof}[Proof of \cref{res:face_estimation}]
We can assume $f_0(P)\ge f_0 (\dual P)$, as otherwise we could~prove the statement for $\dual P$ instead. 
Under this assumption, \eqref{eq:face_estimation} becomes
\begin{equation}\label{eq:face_estimation_2}
f_0(\sigma)+f_0(\dual \sigma) \le f_0(P).
\end{equation}

Assume \eqref{eq:face_estimation_2} holds for all faces of $P$. Choose facets $F_1,\ddd,F_k\in\FF_{d-1}(P)$. The face $\sigma:=F_1\cap\cdots\cap F_k$ is contained in at least $k$ facets, \ie\ $f_0(\dual \sigma)\ge k$. We conclude
$$f_0(F_1\cap\cdots\cap F_k) = f_0(\sigma) \overset{\smash{\text{\eqref{eq:face_estimation_2}}}}\le f_0(P)-f_0(\dual \sigma) \le f_0(P)-k.$$
\Cref{res:facet_intersection} then yields the existence of a vertex-facet assignment.

Now conversely, assume there is a vertex-facet assignment. 
Let $\sigma\in\FF(P)$ be a face of $P$. 
Consider the set $\{F_1,\ddd,F_k\}:=\FF_0(\dual\sigma)$ of facets that contain $\sigma$. 
Then $\sigma=F_1\cap\cdots\cap F_k$ and $f_0(\dual \sigma)=k$. 
Since $P$ has a vertex-facet assignment, there holds \eqref{eq:facet_intersection} in \cref{res:facet_intersection}, and therefore
$$f_0(\sigma)+f_0(\dual \sigma)= f_0(F_1\cap\cdots\cap F_k)+ k\overset{\smash{\text{\eqref{eq:facet_intersection}}}}\le f_0(P).$$
This proves \eqref{eq:face_estimation_2}.
\end{proof}


We conclude a second sufficient condition, which is more convenient to apply to a large class of polytopes.

\begin{Satz}\label{res:weak_condition}
Suppose that for every face $\sigma\in\FF(P)$ holds: $\sigma$ or $\dual\sigma$ (or both) have at least as many facets as vertices.
%
Then $P$ has a vertex-facet assignment.
\begin{proof}

Choose some arbitrary face $\sigma\in\FF(P)$.
We show that $\sigma$ satisfies \eqref{eq:face_estimation}.
\Cref{res:face_estimation} then proves the existence of a vertex-facets assignment.

Suppose $\sigma$ has at least as many facets as vertices, or in formulas, $(*)\, k\ge f_0(\sigma)$, where $k$ denotes the number of facets of $\sigma$.
Each facet of $\sigma$ can be written as $\sigma\cap F$ with some $F\in \FF_0(\dual P)\setminus  \FF_0(\dual\sigma)$ (\ie\ $F$ is a facet of $P$ that does not contain $\sigma$).
It follows $(**)\,k\le f_0(\dual P)-f_0(\dual \sigma)$, and we conclude \eqref{eq:face_estimation} via
$$
f_0(\sigma)+f_0(\dual \sigma) 
\overset{\smash{(*)}}\le k+f_0(\dual \sigma)
\overset{\smash{(**)}}\le f_0(\dual P) 
\le \max\{f_0(P),f_0(\dual P)\}.
$$

On the other hand, if not $\sigma$, but only its dual face $\dual\sigma$ has at least as many facets as ver\-tices, we can apply above reasoning to $\dual\sigma$ as a face of $\dual P$, which yields
$$f_0(\dual\sigma) +f_0(\ddual \sigma) \le \max\{f_0(\dual P), f_0(\ddual P)\}.$$
As $\ddual P=P$ and $\ddual\sigma=\sigma$, this is equivalent to \eqref{eq:face_estimation} for $\sigma$.

\end{proof}
\end{Satz}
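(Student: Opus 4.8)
The plan is to reduce the statement to Theorem~\ref{res:face_estimation} by verifying the inequality~\eqref{eq:face_estimation}, namely $f_0(\sigma)+f_0(\dual\sigma)\le\max\{f_0(P),f_0(\dual P)\}$, for each face $\sigma\in\FF(P)$ separately. The hypothesis gives us, for every $\sigma$, that either $\sigma$ has at least as many facets as vertices, or $\dual\sigma$ does. By polytope duality (and since $\ddual P=P$, $\ddual\sigma=\sigma$), it suffices to handle the first case: assume $\sigma$ has $k$ facets with $k\ge f_0(\sigma)$.

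The key combinatorial observation is that the facets of the face $\sigma$ (viewed as a polytope in its own right) are exactly the proper faces of $P$ of the form $\sigma\cap F$, where $F$ ranges over facets of $P$ \emph{not} containing $\sigma$. Indeed, every facet of $\sigma$ is obtained by intersecting $\sigma$ with some facet of $P$ that meets $\sigma$ in a proper face, and such facets are precisely the ones in $\FF_0(\dual P)\setminus\FF_0(\dual\sigma)$; distinct facets of $P$ may cut out the same facet of $\sigma$, but this only helps the inequality. Hence the number of facets of $\sigma$ is at most the number of facets of $P$ not containing $\sigma$, i.e.\ $k\le f_0(\dual P)-f_0(\dual\sigma)$. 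Combining this with $f_0(\sigma)\le k$ gives
$$
f_0(\sigma)+f_0(\dual\sigma)\le k+f_0(\dual\sigma)\le f_0(\dual P)\le\max\{f_0(P),f_0(\dual P)\},
$$
which is exactly~\eqref{eq:face_estimation}. In the remaining case, where only $\dual\sigma$ (as a face of $\dual P$) has at least as many facets as vertices, applying the argument just given to $\dual\sigma\in\FF(\dual P)$ yields $f_0(\dual\sigma)+f_0(\ddual\sigma)\le\max\{f_0(\dual P),f_0(\ddual P)\}$, which is~\eqref{eq:face_estimation} for $\sigma$ by self-duality.

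The main (and essentially only) obstacle is justifying the claim that every facet of the polytope $\sigma$ arises as $\sigma\cap F$ for some facet $F$ of $P$ with $\sigma\not\subseteq F$, and that the resulting map from $\{\text{facets of }P\text{ not containing }\sigma\}$ onto $\{\text{facets of }\sigma\}$ is well-defined and surjective. This is a standard fact about faces of polytopes (faces of a face $\sigma$ are exactly the faces of $P$ contained in $\sigma$, and facets of $\sigma$ are the maximal proper ones, each the intersection of $\sigma$ with a supporting hyperplane of $P$ that can be taken to be facet-defining), but it is the one place where genuine polytope geometry, rather than pure lattice manipulation, enters. Everything else is a short chain of inequalities plus the duality bookkeeping already set up in Section~\ref{sec:notation}.
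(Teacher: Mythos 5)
Your proposal is correct and follows essentially the same route as the paper's proof: verify \eqref{eq:face_estimation} for each face via the bound $k\le f_0(\dual P)-f_0(\dual\sigma)$ coming from writing each facet of $\sigma$ as $\sigma\cap F$ with $F$ a facet of $P$ not containing $\sigma$, then handle the remaining case by duality. The extra care you take in justifying the surjectivity of $F\mapsto\sigma\cap F$ onto the facets of $\sigma$ is a welcome elaboration of a step the paper states without proof.
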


An immediate consequence of \cref{res:weak_condition} is the existence of vertex-facet assignments for polytopes up to dimension $6$.
\begin{Kor}\label{res:dim_6}
Every polytope $P\subset\RR^d$ of dimension $d\le 6$ possesses a vertex-facet assignment.
\end{Kor}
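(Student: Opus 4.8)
The plan is to apply \cref{res:weak_condition}: it suffices to show that for every face $\sigma\in\FF(P)$ of a polytope $P$ of dimension $d\le 6$, either $\sigma$ or $\dual\sigma$ has at least as many facets as vertices. Write $e:=\dim\sigma$, so that $\dual\sigma$ has dimension $d-1-e$. The key observation is that a polytope of dimension $m$ always has at least as many facets as vertices when $m$ is small: for $m\le 2$ this is immediate (a point, a segment, or an $n$-gon), and the remaining case to handle is $m=3$, where Euler's formula gives $f_0-f_1+f_2=2$ together with $2f_1\ge 3f_0$ and $2f_1\ge 3f_2$, from which one deduces that $f_2\ge f_0$ fails only when\ldots{} — in fact for $3$-polytopes $f_2$ can be smaller than $f_0$ (e.g.\ the triangular prism has $6$ vertices and $5$ facets), so the honest statement is: every polytope of dimension $m\le 2$ has at least as many facets as vertices. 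Hence the real content is a dimension count.

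First I would do the dimension count. For a face $\sigma$ with $\dim\sigma = e$ and $\dim\dual\sigma = d-1-e$, at least one of $e$ and $d-1-e$ is $\le\lfloor (d-1)/2\rfloor$. For $d\le 6$ we have $\lfloor (d-1)/2\rfloor\le 2$, so at least one of $\sigma,\dual\sigma$ has dimension $\le 2$. Second, I would invoke the elementary fact that every polytope of dimension $\le 2$ has at least as many facets as vertices: a $0$-polytope has one vertex and (by convention, as a face lattice) its number of facets is also $1$; a $1$-polytope (segment) has $2$ vertices and $2$ facets; a $2$-polytope (convex $n$-gon) has $n$ vertices and $n$ edges, hence $n$ facets. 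Combining these two steps, the hypothesis of \cref{res:weak_condition} is satisfied for every face $\sigma$, and \cref{res:weak_condition} then yields the desired vertex-facet assignment.

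The main (and only) obstacle is the degenerate low-dimensional bookkeeping: one must be careful that $\sigma$ or $\dual\sigma$ could be the empty face $\eset$ or the whole polytope $P$, i.e.\ $e=-1$ or $e=d$. When $\sigma=\eset$ we have $\dual\sigma=P$, and when $\sigma=P$ we have $\dual\sigma=\eset$; in either case one of the two ``faces'' is a polytope while the other is the improper face, and the statement ``has at least as many facets as vertices'' must be read for the polytope of that dimension (with $P$ itself possibly having dimension up to $6$). But this is not a problem: if $\sigma=P$ then $\dual\sigma=\eset$, whose dual as a face of $\dual P$ is the polytope $\dual P$ — wait, rather one simply checks directly that \eqref{eq:face_estimation} holds for $\sigma\in\{\eset,P\}$ since then $f_0(\sigma)+f_0(\dual\sigma)=f_0(P)+0$ or $0+f_0(\dual P)$, both $\le\max\{f_0(P),f_0(\dual P)\}$. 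So the improper faces are handled trivially by \cref{res:face_estimation} directly, and for all proper faces $\sigma$ the dimension count above applies with $0\le e\le d-1$. Thus no genuine obstacle remains, and the proof is short.
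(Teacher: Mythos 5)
Your proof is correct and follows essentially the same route as the paper: a dimension count showing $\min\{\dim\sigma,\dim\dual\sigma\}\le 2$ for $d\le 6$, combined with the fact that polytopes of dimension $\le 2$ have at least as many facets as vertices, so that \cref{res:weak_condition} applies. The side remarks about $3$-polytopes and the improper faces are harmless (and the latter is handled correctly), but neither is needed beyond what the paper's own two-line argument already contains.
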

\begin{proof}
If $P$ is of dimension $d \le 6$, then for every face $\sigma\in\FF(P)$ we have
$$\min\{\dim(\sigma),\dim(\dual\sigma)\}\le 2.$$
Polytopes of dimension $\le 2$ have the same number of vertices and facets.
\Cref{res:weak_condition} then proves the existence.
\end{proof}

Therefore, the counterexamples can only occur in dimension $7$ or higher. Each $7$-dimensional counterexample must have a $3$-face $\sigma\in\FF(P)$ with $f_0(\sigma)+f_0(\dual \sigma)> \max\{f_0(P),f_0(\dual P)\}$.
We are going to construct such in the next section.

Another consequence is the existence of vertex-facet assignments for polytopes with only self-dual  facets, \eg\ simplicial polytopes:

\begin{Kor}\label{res:simple_simplicial}
Simple/simplicial polytopes have vertex-facet assignments.
\end{Kor}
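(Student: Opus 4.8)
The plan is to deduce this directly from \cref{res:weak_condition}, so the whole task is to verify the hypothesis of that theorem: for every face $\sigma\in\FF(P)$, either $\sigma$ or $\dual\sigma$ has at least as many facets as vertices. By polytope duality it suffices to treat the simplicial case (if $P$ is simple, then $\dual P$ is simplicial, and a vertex-facet assignment for $\dual P$ is the same as one for $P$). So assume $P$ is simplicial.

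First I would recall that every proper face $\sigma$ of a simplicial polytope is a simplex. Hence if $\sigma$ has dimension $\delta\le d-1$, then $\sigma$ is a $\delta$-simplex, which has exactly $\delta+1$ vertices and exactly $\delta+1$ facets; in particular $f_0(\sigma)=f_0(\dual\sigma')$ — more to the point, $\sigma$ has at least as many facets as vertices (equality, in fact). This covers all proper faces $\sigma\ne P$. The only remaining face is $\sigma=P$ itself. For $\sigma=P$ we have $\dual\sigma=\dual P$ viewed as the bottom face — equivalently $\dual\sigma$ is a single point (the empty face of the dual), which trivially has as many facets as vertices (namely, one should check the degenerate case: $\dual P$ as the dual face of the improper face $P$ is the empty polytope, for which the condition holds vacuously). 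Slightly more carefully: the faces $\sigma$ for which neither $\sigma$ nor $\dual\sigma$ is a proper nonempty face are $\sigma=P$ and $\sigma=\eset$; in both cases one of $\sigma,\dual\sigma$ is a point or the empty face, and the facets-vs-vertices inequality is immediate. Thus the hypothesis of \cref{res:weak_condition} holds for every $\sigma\in\FF(P)$, and the corollary follows.

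The only subtlety — and the one place to be careful rather than the "main obstacle," since there is no real obstacle here — is the bookkeeping around the improper faces $\eset$ and $P$ and making sure the phrase "self-dual facets" in the section heading is consistent: the cleanest formulation is simply that every proper face of a simplicial polytope is a simplex, hence self-dual as an abstract polytope, so $f_0(\sigma)=f_{\dim\sigma-1}(\sigma)$ and the condition of \cref{res:weak_condition} is met with equality. For the simple case, dualize. I would write roughly:

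\begin{proof}
By duality it suffices to prove the claim for simplicial polytopes; a vertex-facet assignment for $\dual P$ is one for $P$, and $\dual P$ is simple when $P$ is simplicial. So let $P$ be simplicial. Every proper nonempty face $\sigma\in\FF(P)$ is a simplex, hence has equally many vertices and facets, so in particular $\sigma$ has at least as many facets as vertices. For the remaining faces $\sigma\in\{\eset,P\}$, one of $\sigma$, $\dual\sigma$ is the empty face or a single vertex, for which the inequality holds trivially. Hence the hypothesis of \cref{res:weak_condition} is satisfied for every $\sigma\in\FF(P)$, and $P$ has a vertex-facet assignment.
\end{proof}
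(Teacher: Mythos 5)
Your proof is correct and is exactly the argument the paper intends: every proper face of a simplicial polytope is a simplex, hence has equally many vertices and facets, so \cref{res:weak_condition} applies, and the simple case follows by duality. The paper states the corollary without proof as an immediate consequence of \cref{res:weak_condition}, and your careful handling of the improper faces $\eset$ and $P$ is a harmless (and correct) bit of extra bookkeeping.
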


\section{Counterexamples in high dimensions}\label{sec:high_dim}

In this section, we construct polytopes without vertex-facet assignments for any dimension $d\geq 7$.
These ``counterexamples'' are based on the \emph{free join} construction.

\begin{Konstr}
Given polytopes $P_i\subset\RR^{d_i},i\in\{1,2\}$, their free join $P_1\bowtie P_2$ is defined as the convex hull of copies of $P_1$ and $P_2$ that are embedded into skew affine subspaces of $\RR^{\smash{d_1+d_2+1}}$.
Lets call these copies $\bar P_1$ and $\bar P_2$.
%
%
%

The faces of the free join can be given in terms of the faces of $P_1$ and $P_2$.
Clearly, $\bar P_1$ and $\bar P_2$ are faces of the free join.
More generally, for each face $\sigma\in\FF(P_i)$, the corresponding face $\bar \sigma$ of $\bar P_i$ is a face of $P_1\join P_2$ as well. 
All other faces of the free join are of the following form:
for any two faces $\sigma_1\in \FF(P_1),\sigma_2\in \FF(P_2)$, the convex hull $\sigma_1\bowtie\sigma_2:=\conv(\bar \sigma_1\cup\bar\sigma_2)$ is a face of $P_1\join P_2$.
%

The number of vertices and facets of the free join are given as follows:
$$f_0(P_1\bowtie P_2)= f_0(P_1)+ f_0(P_2),\qquad f_{d-1}(P_1\bowtie P_2)= f_{d-1}(P_1)+ f_{d-1}(P_2).$$
%
This is clear for the vertices.
The facets of $P_1\bowtie P_2$ are $P_1\bowtie F,F\in\FF_{d-1}(P_2)$ and $F\bowtie P_2,F\in\FF_{d-1}(P_1)$.
For details regarding the free join and its properties, consider \cite[Corollary~2]{TiwaryEl2014}.
\end{Konstr}

\begin{Satz}\label{res:high-dimensions}
Let $P_1\subset\RR^{d_1}$ and $P_2\subset\RR^{d_2}$ be polytopes with
$$f_0(P_1) > f_{d-1}(P_1)\quad\text{and}\quad f_0(P_2) < f_{d-1}(P_2).$$
Then $P_1\join P_2$ does not have a vertex-facet assignment.
\begin{proof}
Let $P:=P_1\join P_2$.
The face $\bar P_1\in\FF(P)$ has $f_0(P_1)$ vertices and is contained in the $f_{d-1}(P_2)$ facets $P_1\bowtie F$ with $F\in\FF_{d-1}(P_2)$. 
We check that $\bar P_1$ violates condition \eqref{eq:face_estimation} (in the following, $\dual{\bar P_1}$ shall denotes a dual face in $P$, rather than the dual polytope of $\bar P_1$):
\begin{align*}
f_0(\bar P_1)+f_0(\dual{\bar P_1}) &= f_0(P_1)+f_{d-1}(P_2) \\
&> \max\{f_0(P_1)+f_0(P_2), f_{d-1}(P_1)+f_{d-1}(P_2)\} \\
&= \max\{f_0(P),f_{d-1}(P)\}. \\
&= \max\{f_0(P),f_0(\dual P)\}.
\end{align*}
Then, $P$ has no vertex-facet assignment by \cref{res:face_estimation}.
\end{proof}
\end{Satz}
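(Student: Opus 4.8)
The plan is to exhibit a single face of $P := P_1 \bowtie P_2$ that violates the characterizing inequality \eqref{eq:face_estimation} of \cref{res:face_estimation}, thereby concluding that no vertex-facet assignment exists. The natural candidate is the face $\bar P_1$, the embedded copy of $P_1$ inside the free join. First I would record the two combinatorial facts about the free join supplied by the preceding construction: that $f_0(P) = f_0(P_1) + f_0(P_2)$ and $f_{d-1}(P) = f_{d-1}(P_1) + f_{d-1}(P_2)$, and that the facets of $P$ are exactly the sets $F \bowtie P_2$ for $F \in \FF_{d-1}(P_1)$ together with $P_1 \bowtie F$ for $F \in \FF_{d-1}(P_2)$.

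Next I would identify the two quantities attached to $\bar P_1$. Its vertices are precisely the vertices of $P_1$, so $f_0(\bar P_1) = f_0(P_1)$. For the dual face, I need to count the facets of $P$ containing $\bar P_1$: a facet $F \bowtie P_2$ contains $\bar P_1$ only when $F = P_1$, which is not a facet, so none of these qualify; a facet $P_1 \bowtie F$ contains $\bar P_1$ for every $F \in \FF_{d-1}(P_2)$. Hence $f_0(\dual{\bar P_1}) = f_{d-1}(P_2)$, where $\dual{\bar P_1}$ is the dual \emph{face} in $\FF(P)$ (the notational warning in \cref{sec:notation} about reusing non-greek symbols for faces applies here, and I would flag it explicitly).

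The arithmetic heart is then the chain
\begin{align*}
f_0(\bar P_1) + f_0(\dual{\bar P_1}) &= f_0(P_1) + f_{d-1}(P_2) \\
&> \max\{\, f_0(P_1) + f_0(P_2),\; f_{d-1}(P_1) + f_{d-1}(P_2)\,\} \\
&= \max\{\, f_0(P),\; f_{d-1}(P)\,\} = \max\{\, f_0(P),\; f_0(\dual P)\,\},
\end{align*}
where the strict inequality uses both hypotheses: $f_0(P_1) > f_{d-1}(P_1)$ gives $f_0(P_1) + f_{d-1}(P_2) > f_{d-1}(P_1) + f_{d-1}(P_2)$, and $f_{d-1}(P_2) > f_0(P_2)$ gives $f_0(P_1) + f_{d-1}(P_2) > f_0(P_1) + f_0(P_2)$; so the left side strictly exceeds both terms of the max. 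Since $\bar P_1 \in \FF(P)$ fails \eqref{eq:face_estimation}, \cref{res:face_estimation} shows $P$ has no vertex-facet assignment.

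I do not anticipate a serious obstacle; the only delicate point is the bookkeeping for $f_0(\dual{\bar P_1})$, i.e., being careful that $\dual{\bar P_1}$ means the dual face within $\FF(P)$ and that one has correctly enumerated exactly which facets of the free join contain the subpolytope copy $\bar P_1$. Everything else is the two-line estimate above, granted the facet description of $P_1 \bowtie P_2$ from the construction.
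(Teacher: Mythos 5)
Your proposal is correct and follows essentially the same route as the paper's own proof: both single out the face $\bar P_1$, compute $f_0(\bar P_1)=f_0(P_1)$ and $f_0(\dual{\bar P_1})=f_{d-1}(P_2)$, and verify the strict violation of \eqref{eq:face_estimation} against both terms of the maximum before invoking \cref{res:face_estimation}. Your added bookkeeping (enumerating which facets of the join contain $\bar P_1$, and spelling out which hypothesis beats which term of the max) is a slightly more explicit version of the same argument.
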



The conditions $f_0(P_1) > f_{d-1}(P_1)$ and $f_0(P_2) < f_{d-1}(P_2)$ cannot be satisfied for polytopes in dimension $\leq 2$, which is the reason that this construction only yields counterexamples in dimensions $\geq 3+3+1=7$.

\begin{Bsp} We list some explicit counterexamples for $d\geq 7$.
\begin{enumerate}[label={(\roman*)},leftmargin=*,noitemsep]
\vspace{0.1em}
\item Counterexamples exist in all dimensions $d\ge 7$. 
Choose $d_1,d_2\ge 3$ with $d_1+d_2+1=d$ and polytopes $P_i\subset\RR^{d_i},i\in\{1,2\}$, so that $P_1$ has more vertices than facets (\eg\ the $d_1$-cube), and $P_2$ has more facets than vertices (\eg\ the $d_2$-dimensional cross-polytope). 
Then $P_1\bowtie P_2$ has no vertex-facet assignment by \cref{res:high-dimensions}.
\vspace{0.3em}
\item 
One might think that in self-dual polytopes the duality map $v\mapsto \dual v$ (dual face to the vertex $v$) somehow induces an injective map from vertices to non-incident facets, hence, that a self-dual polytope always has a vertex-facets assignment.
But surprisingly, there are self-dual polytopes \emph{without} vertex-facet assignments in dimension $d=2j+1,j\ge 3$. 
Let $P_1\subset\RR^j$ be a polytope with more vertices than facets (\eg\ the $j$-cube) and $P_2:=\dual P_1$ its dual.
The free join $P_1\bowtie P_2$ then is a self-dual $d$-dimensional polytope (see \cite[Corollary~2]{TiwaryEl2014}) without vertex-facet assignment (by \cref{res:high-dimensions}).
\end{enumerate}
\end{Bsp}

\section{Open problems and related questions}

Several related questions might be asked, for example, about the nature of further counterexamples. 
Are there other counterexamples for $d=7$ besides the free joins?
We suspect that such can be constructed by taking the convex hull of a free join and a point.
In general, what other ways are there to characterize polytopes without vertex-facet assignments?



Günter Ziegler brought up the following naturally related problem (personal communication):

\begin{Probl}\label{prob:ziegler}
Given a polytope $P\subset\RR^d$ with $f_0(P)\le f_{d-1}(P)$. Does there exist an injective map from vertices to \underline{incident} facets?
\end{Probl}

The answer to this seems comparatively easy and was already suggested by Ziegler himself: No. 
Even more, a polytope can have arbitrarily many more facets than vertices while still not having such an assignment. 
The following construction is based on an idea of Ziegler and uses the \emph{connected sum} operation (see \cite[Example~8.41]{Ziegler1995}).

\begin{figure}
\centering
\includegraphics[width=0.72\textwidth]{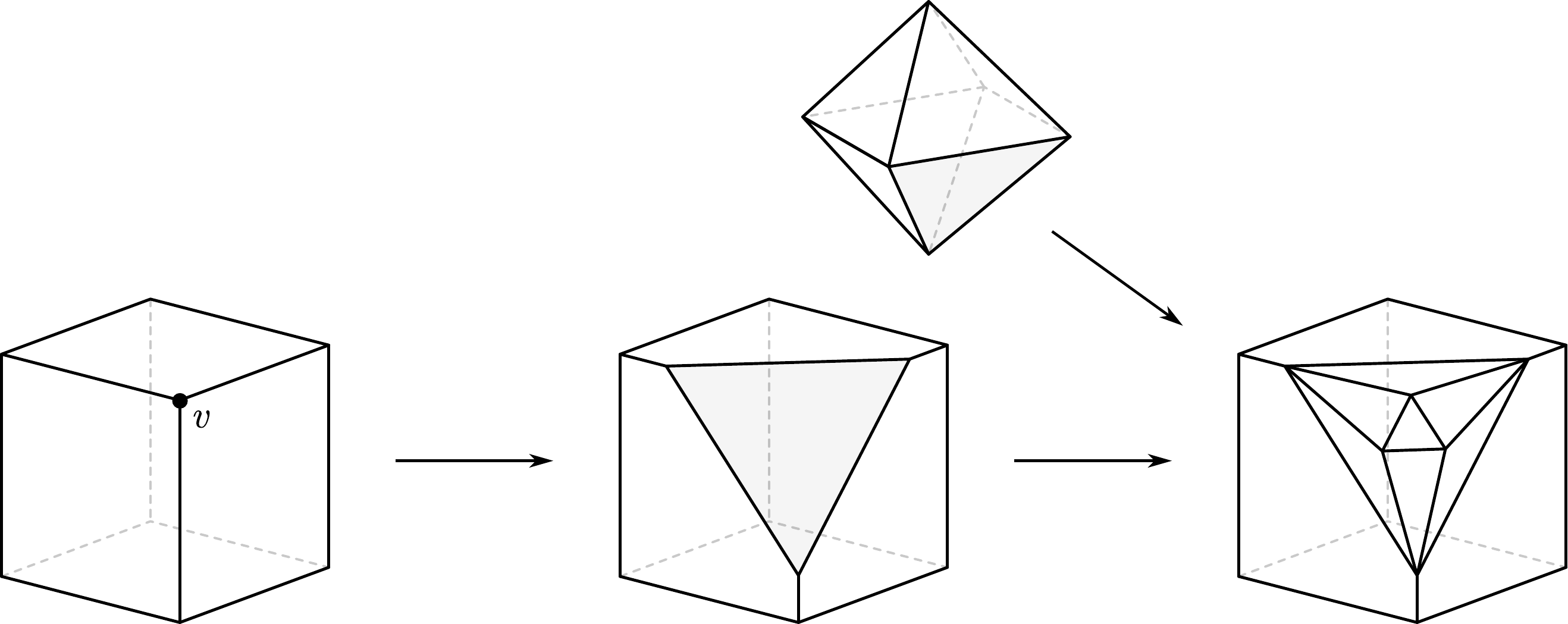}
\caption{Construction of a counterexample to the statement in \cref{prob:ziegler}.}
\label{fig:cube_mod2}
\end{figure}

\begin{Bsp}\label{ex:glue}
Take the $3$-cube $Q\subset\RR^3$ and let $v$ be one of its vertices. 
The other seven vertices are incident to a total number of only six facets, hence an injective map from vertices to incident facets cannot exist. 
One could argue, that this is due to $f_0(Q)>f_{d-1}(Q)$. 
However, one can modify $Q$ around $v$ without invalidating the current reasoning (see \cref{fig:cube_mod2}). 
Cut off the vertex $v$, and obtain a new triangular facet that is not incident to any of the other seven vertices. 
Choose a simplicial $3$-polytope $P\subset\RR^3$ with sufficiently many more facets than vertices and ``glue it'' (in the sense of the connected sum) to that triangular face.
The new polytope $Q':=P\# Q$ satisfies $f_0(Q')\ll f_{d-1}(Q')$ while still not having an injective map from vertices to incident facets.
\end{Bsp}


\par\medskip
\noindent
\textbf{Acknowledgements.} The second author gratefully acknowledges the support by the funding of the European Union and the Free State of Saxony (ESF).


\providecommand{\bysame}{\leavevmode\hbox to3em{\hrulefill}\thinspace}

\end{document}